\def\@settitle{\begin{center}\Large\textbf{\@title}\end{center}}
\renewenvironment{abstract}{
\begin{center}%
{\vspace{.8\baselineskip}
\bfseries \abstractname
\vspace{.6\baselineskip}}
\end{center}\quotation
}
\renewcommand{\section}{\makeatletter
\renewcommand{\@seccntformat}[1]{{\csname the##1\endcsname.}\hspace{0.45em}}
\makeatother
\@startsection
{section}
{1}
{0pt}
{1.1\baselineskip}
{1.1\baselineskip}
{\centering\normalsize\bfseries\mathversion{bold}}}
\renewcommand{\subsection}{\makeatletter
\renewcommand{\@seccntformat}[1]{{\csname the##1\endcsname.}\hspace{0.45em}}
\makeatother
\@startsection
{subsection}
{2}
{0pt}
{0.5\baselineskip}
{0.25\baselineskip}
{\normalsize\bfseries\mathversion{bold}}}
\renewcommand{\subsubsection}{\makeatletter
\renewcommand{\@seccntformat}[1]{{\rm{\csname the##1\endcsname.}}\hspace{0.45em}}
\makeatother
\@startsection
{subsubsection}
{3}
{0pt}
{0.5\baselineskip}
{0.2\baselineskip}
{\normalsize\bfseries\mathversion{bold}}}
\newtheorem{tm}{Theorem}[section]
\newtheorem{lm}[tm]{Lemma} 
\newtheorem{co}[tm]{Corollary}
\newtheorem{ex}[tm]{Example}
\newtheorem{re}[tm]{Remark}
\theoremstyle{definition} 
\newtheorem{df}[tm]{Definition}
\renewcommand{\P}[1]{\mathbb{P}\left(#1\right)}
\newcommand{\E}[1]{\mathbb E\left[#1\right]}
\newcommand{\X}{\mathbf X}
\newcommand{\Y}{\mathbf Y}
\newcommand{\Z}{\mathbf Z}
\newcommand{\R}{\mathbb R}
\renewcommand{\c}{\mathbf c}
\newcommand{\x}{\mathbf x}
\newcommand{\T}{\mathcal T}
\renewcommand{\ge}{\geqslant}
\renewcommand{\le}{\leqslant}
\newcommand{\Om}{\mathbf\Omega}
\newcommand{\om}{\bm\omega}
\newcommand{\ome}{\omega}
\newcommand{\I}[1]{\mathbf 1_{\left\{#1\right\}}}
\newcommand{\Exp}[1]{\exp\left(#1\right)}
\newcommand{\m}{\mid}
\title{When Janson meets McDiarmid: Bounded difference inequalities \\
under graph-dependence}
\author{}
\date{}
\begin{document}

\maketitle

\begin{center}
Rui-Ray Zhang \\
rui.zhang@monash.edu \\
School of Mathematics, Monash University
\end{center}

\begin{abstract}
\small{
We establish concentration inequalities for 
Lipschitz functions of dependent random variables,
whose dependencies are specified by forests.
We also give concentration results for decomposable functions,
improving Janson's Hoeffding-type inequality for the summation of graph-dependent bounded variables.
These results extend McDiarmid's bounded difference inequality 
to the dependent cases.}

\end{abstract}

\section{Introduction}

Concentration inequalities 
bound the deviation of a function of random variables 
from some value that is usually the expectation,
see \cite{boucheron2013concentration} for a good reference.
One of the well-known ones, bounded difference inequality 
(also called McDiarmid's inequality or Azuma-Hoeffding inequality)
gives exponential concentration bound
for Lipschitz functions of independent random variables.

McDiarmid's inequality requires independence, 
thus is restrictive in certain applications.
We extend it to the dependent cases via dependency graph, 
which is a common combinatorial tool for modelling the dependencies among random variables.
The dependency graph has been widely used in the probability and statistics
to establish normal approximation or Poisson approximation
via the Stein's method, cumulants, etc.
(see, for example, \cite{baldi1989normal, janson1990poisson, janson1988normal, isaev2020extreme}).
It is also heavily used in probabilistic combinatorics, 
such as Lov\'asz local lemma \cite{erdos1975problems}, 
Janson's inequality \cite{janson1988exponential}, etc.

In this note, we use the standard graph-theoretic notations. 
All graphs considered are finite, undirected and simple.
Given a graph $G = (V, E)$, 
let $V(G)$ be the vertex set and $E(G)$ be the edge set.
The edge connecting a pair of vertices $u, v$ is denoted as $\{ u, v \}$,
which is assumed to be unordered.
For every $S \subseteq V(G)$,
the induced subgraph of $G$ by $S$ is denoted as $G[S]$, 
that is, for any two vertices $u, v \in S$, $u, v$ are adjacent in $G[S]$ 
if and only if they are adjacent in $G$.
A tree is a connected, acyclic graph, and a forest is a disjoint union of trees.

Throughout this note, let $n$ be a positive integer 
and $[n]$ be the set $\{1,2, \ldots, n\}$. 
Let $\Omega_i$ be a Polish space for all $i\in [n]$,  
$\Om = \prod_{i\in[n]} \Omega_i = \Omega_1 \times \ldots \times \Omega_n $ be the product space, 
$\R$ be the set of real numbers, 
and $\R_+$ be the set of non-negative real numbers.
Let $\|\cdot\|_p$ denote the standard $\ell_p$-norm of a vector.
We use uppercase letters for random variables, lowercase letters for their realizations,
and bold letters for vectors.
For every set $V \subseteq [n]$,  
let $\Om_V = \prod_{i \in V} \Omega_i$,
$\X_V = ( X_i )_{i\in V}$, 
and $\x_V = ( x_i )_{i \in V}$.

We first introduce the definition of a Lipschitz function.
\begin{df}[$\c$-Lipschitz]
Given a vector $\c= (c_1, \ldots, c_n) \in \R_+^n$, 
a function $f:\Om \rightarrow \R$ is $\c$-Lipschitz 
if for all $\x = (x_1, \ldots, x_n)$ and $\x' = (x'_1, \ldots, x'_n)\in \Om$,
\begin{align}
| f(\x) - f(\x') |
\le \sum_{i = 1}^n c_i \I{ x_i \ne x'_i },
\label{lip}
\end{align}
where $c_i$ is the $i$-th Lipschitz coefficient of $f$
(with respect to the Hamming metric).
\end{df}

McDiarmid's inequality states that 
a Lipschitz function of independent random variables concentrates around its expectation.

\begin{tm}[McDiarmid's inequality~\cite{mcdiarmid1989method}]
Let $f: \Om \rightarrow \R$ be $\c$-Lipschitz 
and $\X = (X_1, \ldots, X_n)$ be a vector of independent random variables 
that takes values in $\Om$.
Then for every $t>0$,
\begin{align}
\P { f(\X) - \E { f(\X) } \ge t }
\le \exp \left( - \dfrac{2t^2}{ \| \c \| ^2_2} \right).
\label{mc-ie}
\end{align}
\label{McDiarmid's inequality}
\end{tm}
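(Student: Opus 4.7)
The plan is to prove this by the martingale method, exploiting independence of the $X_i$ to build a Doob martingale and then to control its increments using the Lipschitz property. Write $Z_0 = \E{f(\X)}$ and more generally
\begin{align*}
Z_i = \E{ f(\X) \m X_1, \ldots, X_i }, \qquad i = 0, 1, \ldots, n,
\end{align*}
so that $Z_n = f(\X)$ and $f(\X) - \E{f(\X)} = \sum_{i=1}^n D_i$ where $D_i = Z_i - Z_{i-1}$ forms a martingale difference sequence with respect to the natural filtration $\F_i = \sigma(X_1, \ldots, X_i)$.

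The next step is to show that each $D_i$ is supported in an interval of length $c_i$. Fix $i$ and condition on $X_1, \ldots, X_{i-1}$. By independence, $Z_i$ and $Z_{i-1}$ can both be written as integrals against the product law of $(X_{i+1}, \ldots, X_n)$, but $Z_i$ additionally depends on $X_i$ while $Z_{i-1}$ averages that coordinate out. Using the $\c$-Lipschitz hypothesis \eqref{lip}, I would argue that replacing the value of the $i$-th coordinate changes $f$ by at most $c_i$, hence
\begin{align*}
\sup_{x_i} \E{f(\X) \m X_1, \ldots, X_{i-1}, X_i = x_i} - \inf_{x_i'} \E{f(\X) \m X_1, \ldots, X_{i-1}, X_i = x_i'} \le c_i,
\end{align*}
which shows that $D_i$ takes values in an interval of length at most $c_i$ conditionally on $\F_{i-1}$.

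Having established the bounded-difference property, I would then apply Hoeffding's lemma conditionally: for any $\lambda > 0$,
\begin{align*}
\E{ \Exp{\lambda D_i} \m \F_{i-1}} \le \Exp{ \lambda^2 c_i^2 / 8 }.
\end{align*}
Iterating this bound through the tower property yields $\E{\Exp{\lambda (f(\X) - \E{f(\X)})}} \le \Exp{\lambda^2 \|\c\|_2^2/8}$, after which Markov's inequality applied to the event $\{f(\X) - \E{f(\X)} \ge t\}$ and optimization over $\lambda = 4t/\|\c\|_2^2$ give the claimed exponent $-2t^2/\|\c\|_2^2$.

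The main obstacle is the bounded-difference step for $D_i$: one must carefully justify that the conditional expectations differ by at most $c_i$. This requires writing $Z_i$ and $Z_{i-1}$ as integrals with respect to the same product measure on the remaining coordinates and then using \eqref{lip} pointwise inside the integral, which is where independence of $(X_{i+1}, \ldots, X_n)$ from $(X_1, \ldots, X_i)$ is essential. Everything else (Hoeffding's lemma, the Chernoff trick, optimization in $\lambda$) is standard and mechanical.
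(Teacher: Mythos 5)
Your proof is correct and follows the same route the paper relies on: the Doob (vertex-exposure) martingale with conditional Hoeffding bounds is exactly the method packaged in the paper's Lemma~\ref{McDiarmid's Lemma}, from which Theorem~\ref{McDiarmid's inequality} follows immediately once independence gives the sup--inf bound $c_i$ on the conditional expectations, as you argue. The paper itself simply cites \cite{mcdiarmid1989method} for this theorem, so there is nothing further to reconcile.
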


We extend McDiarmid's inequality to the graph-dependent case,
where the dependencies among random variables are characterized by a dependency graph.


\begin{df}[Dependency graph]
Given a graph $G = (V, E)$,
we say that a random vector $\X = (X_i)_{i \in V}$ is $G$-\textit{dependent}
if for any disjoint $S, T \subset V$ such that $S$ and $T$ are non-adjacent in $G$
(that is, no edge in $E$ has one endpoint in $S$ and the other in $T$),
random variables $\{ X_i \}_{i \in S}$ and $\{X_j \}_{j \in T}$ are independent.

\label{dep graph}
\end{df}

The above dependency graph is a strong version; there are ones 
with weaker assumptions, such as the one used in Lov\'asz local lemma.
Let $K_n$ denote the complete graph on $[n]$,
that is, every two vertices are adjacent.
Then $K_n$ is a dependency graph 
for any set of variables $( X_i )_{i \in [n]}$.
Note that the dependency graph for a set of random variables 
may not be necessarily unique
and the sparser ones are the more interesting ones.
The term `$G$-dependent' (graph-dependent) first appeared in \cite{petrovskaya1983central},
and various other notions such as `locally dependent' \cite{chen2004normal}, 
`partly dependent' \cite{janson2004large}, etc. 
are essentially referring to the graph-dependence.

Janson obtained a Hoeffding-type inequality for graph-dependent random variables
by breaking up the sum into sums of independent variables.

\begin{tm}[Janson's concentration inequality~\cite{janson2004large}]
Let random vector $\X$ be $G$-dependent such that for every $i \in V(G)$, 
random variable $X_i$ takes values in a real interval of length $c_i \ge 0$.
Then, for every $t > 0$, 
\begin{align}
\P{ \sum_{i \in V(G)} X_i - \E { \sum_{i \in V(G)} X_i } \ge t }
\le \Exp{ - \dfrac{2t^2}{ \chi_f(G) \| \c \| ^2_2} },
\label{eq:janson}
\end{align}
where $\c= (c_i )_{i \in V(G)}$ and $\chi_f(G)$ is 
the fractional chromatic number of $G$.
\label{Janson inequality}
\end{tm}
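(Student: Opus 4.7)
The plan is to reduce the bound to a collection of independent-summand Hoeffding inequalities via a fractional coloring of $G$. Recall that the fractional chromatic number $\chi_f(G)$ equals the optimum of the linear program $\min \sum_{I} y_I$ over all nonnegative weightings of the independent sets $I$ of $G$ satisfying $\sum_{I \ni i} y_I \ge 1$ for every $i \in V(G)$; at the optimum we may assume $\sum_{I \ni i} y_I = 1$ for each $i$. Setting $Y_i := X_i - \E{X_i}$ and $S_I := \sum_{i \in I} Y_i$, this equality lets us decompose the centered total as
\begin{align*}
S := \sum_{i \in V(G)} Y_i = \sum_{i} Y_i \sum_{I \ni i} y_I = \sum_{I} y_I\, S_I.
\end{align*}
By the $G$-dependence assumption in Definition~\ref{dep graph}, for each fixed independent set $I$ the variables $\{Y_i\}_{i \in I}$ are mutually independent; this is the only place where the structure of the dependency graph enters the argument.

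I would then run the Chernoff method: for every $s > 0$,
\begin{align*}
\P{S \ge t} \le \Exp{-st}\, \E{\prod_{I} \Exp{s y_I S_I}}.
\end{align*}
The key step is to invoke H\"older's inequality with exponents $p_I := \chi_f(G)/y_I$, which satisfy $\sum_I 1/p_I = \sum_I y_I/\chi_f(G) = 1$. This yields
\begin{align*}
\E{\prod_I \Exp{s y_I S_I}} \le \prod_I \E{\Exp{s \chi_f(G)\, S_I}}^{y_I/\chi_f(G)}.
\end{align*}

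For each independent set $I$, Hoeffding's lemma applied to the independent, centered, bounded variables $\{Y_i\}_{i \in I}$ gives $\E{\Exp{u S_I}} \le \Exp{u^2 \sum_{i \in I} c_i^2 / 8}$ for every $u \in \R$. Substituting $u = s\chi_f(G)$ and swapping the order of summation using the constraint $\sum_{I \ni i} y_I = 1$, the exponent reduces to $s^2 \chi_f(G)\|\c\|_2^2/8$. Optimizing the resulting bound $\Exp{-st + s^2 \chi_f(G)\|\c\|_2^2/8}$ over $s > 0$ at $s = 4t/(\chi_f(G)\|\c\|_2^2)$ produces the claimed inequality.

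The main subtlety is the choice of convexity-type tool. A naive application of Jensen's inequality to the representation $S/\chi_f(G) = \sum_I (y_I/\chi_f(G)) S_I$ as a convex combination loses a factor of $\chi_f(G)$ and produces the weaker bound $\Exp{-2t^2/(\chi_f(G)^2 \|\c\|_2^2)}$. H\"older's inequality with the tailored exponents $p_I = \chi_f(G)/y_I$ is precisely what converts the linear constraint $\sum_{I \ni i} y_I = 1$ into a linear, rather than quadratic, dependence on $\chi_f(G)$ in the final exponent, and is thus the main technical ingredient in the proof.
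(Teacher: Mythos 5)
Your argument is correct, and it is essentially Janson's original proof of this statement, which the paper itself only quotes from \cite{janson2004large} without reproving: decompose the centered sum over the independent sets of an optimal fractional coloring, note that within each independent set the variables are mutually independent, and combine Hoeffding's lemma with the Chernoff method via H\"older's inequality with exponents $p_I=\chi_f(G)/y_I$. All steps check out; the only detail worth making explicit is that Definition~\ref{dep graph} gives independence between two non-adjacent vertex sets, so mutual independence of $\{X_i\}_{i\in I}$ for an independent set $I$ needs a one-line induction (peel off one vertex of $I$ against the rest), and that the normalization $\sum_{I\ni i}y_I=1$ can indeed be assumed by shifting weight from $I$ to $I\setminus\{i\}$.

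The paper's own route to this bound is different: it proves the more general Theorem~\ref{decom} for forest-decomposable functions, splitting the moment generating function by Jensen's inequality with mixing weights $z_k$ chosen proportional to $w_k$ times the block-wise coefficient norms, and then Remark~\ref{jan} specializes to fractional independent covers and applies Cauchy--Schwarz to obtain $D(G,\c)\le\chi_f(G)\|\c\|_2^2$, recovering \eqref{eq:janson} through Corollary~\ref{sum}. The two devices are interchangeable here, but your closing remark slightly overstates the role of H\"older: Jensen does not intrinsically lose a factor of $\chi_f(G)$; it only does so with the unoptimized convex combination. With the tuned weights used in the paper, Jensen yields the intermediate quantity $\bigl(\sum_I y_I\sigma_I\bigr)^2$ with $\sigma_I^2=\sum_{i\in I}c_i^2$, which is at most $\chi_f(G)\|\c\|_2^2$ by Cauchy--Schwarz and in general strictly smaller; this is precisely why the paper's $D(G,\c)$ improves on Janson's constant, whereas your H\"older step lands directly on $\chi_f(G)\|\c\|_2^2$. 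So your proof is shorter and self-contained for the quoted theorem, while the paper's approach buys the sharper and more general bound at the cost of the extra covering machinery.
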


A fractional coloring of a graph $G$ is a mapping $g$ from $\mathcal I(G)$, 
the set of all independent sets of $G$, to $[0, 1]$ such that 
$\sum_{I \in \mathcal I(G): v \in I} g(I) \ge 1$ for every vertex $v \in V(G)$. 
The fractional chromatic number $\chi_f(G)$ of $G$ is the minimum of the value 
$\sum_{I \in \mathcal I(G)} g(I)$ over fractional colorings of $G$.



\section{Results}

Here we introduce our concentration results for 
Lipschitz functions of dependent random variables,
whose dependencies are specified by forests
and for decomposable Lipschitz functions of general graph-dependent variables.

\subsection{Concentration under forest-dependence}

Our first result is for the case where the dependency graph is a tree.
\begin{tm}
Let function $f: \Om\rightarrow \R$ be $\c$-Lipschitz 
and $\Om$-valued random vector $\X$ be $G$-dependent.
If $G$ is a tree,
then for every $t > 0$,
\begin{align*}
\P{ f(\X) - \E { f(\X) } \ge t }
&\le \Exp{ - \dfrac{2t^2}{ c_{\min}^2 + \sum_{ \{i, j\} \in E(G) } ( c_i + c_j)^2 } },
\end{align*}
where $c_{\min}$ is the minimum entry of $\c$.
\label{tree}
\end{tm}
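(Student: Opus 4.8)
The natural approach is the martingale method underlying McDiarmid's inequality, but adapted to exploit the tree structure. Order the vertices of the tree $G$ so as to reveal the random variables one at a time in an order compatible with the tree — concretely, pick a root and list the vertices $v_1, v_2, \ldots, v_n$ so that every $v_k$ (for $k \ge 2$) has its unique neighbor toward the root among $v_1, \ldots, v_{k-1}$ (a BFS or DFS order works). Let $\F_k = \sigma(X_{v_1}, \ldots, X_{v_k})$ and define the Doob martingale $Z_k = \E{ f(\X) \m \F_k }$, with $Z_0 = \E{f(\X)}$ and $Z_n = f(\X)$. The plan is to bound the martingale differences $D_k = Z_k - Z_{k-1}$ in a way that the sum of squared ranges is $c_{\min}^2 + \sum_{\{i,j\} \in E(G)} (c_i + c_j)^2$ rather than the naive $\|\c\|_2^2$, and then apply the Azuma–Hoeffding inequality.

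The key point is the bound on the width of $D_k$. For a leaf-ward step, when we reveal $X_{v_k}$, the $G$-dependence tells us that $X_{v_k}$ is independent of all previously revealed variables \emph{except} $X_{v_{p(k)}}$, where $v_{p(k)}$ is its parent. So conditionally on $\F_{k-1}$, the only already-revealed variable that matters for the distribution of the "future" is the parent. I expect one can show that the conditional range of $Z_k$ given $\F_{k-1}$ is at most $c_{v_k}$ plus a contribution $c_{v_{p(k)}}$ coming from the fact that changing the parent's value can shift the conditional expectation; more carefully, one couples two values $x_{v_k}, x'_{v_k}$ and uses that the conditional law of the remaining (subtree) variables depends on the history only through the parent, giving a coupling that differs in coordinate $v_k$ (cost $c_{v_k}$) and possibly propagates — here is where the $(c_i + c_j)$ terms should emerge, one such term per edge $\{v_{p(k)}, v_k\}$. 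The root contributes only $c_{\min}$ (or, after optimizing which vertex is the root, the smallest coefficient), since there is no parent edge for it.

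Summing the squared widths then gives $\sum_{k} (\text{width of } D_k)^2 \le c_{\min}^2 + \sum_{\{i,j\} \in E(G)} (c_i + c_j)^2$, and Azuma–Hoeffding yields $\P{f(\X) - \E{f(\X)} \ge t} \le \Exp{-2t^2 / (c_{\min}^2 + \sum_{\{i,j\} \in E(G)} (c_i+c_j)^2)}$, as claimed.

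The main obstacle is making the martingale-difference bound rigorous: one must carefully set up the coupling of two conditional distributions (one for each value of $X_{v_k}$) over the not-yet-revealed variables, and argue that because the tree is acyclic, fixing the parent value severs the dependence between the subtree rooted at $v_k$ and the rest, so the coupling can be chosen to touch only the subtree of $v_k$. Controlling how a change at $v_k$ propagates down its subtree — and showing it is absorbed into the $(c_i+c_j)^2$ edge terms rather than double-counted — is the delicate accounting step. An alternative, possibly cleaner, route is to build the coupling recursively down the tree and directly estimate $|\E{f \m \F_{k}} - \E{f \m \F_{k-1}}|$ by a telescoping argument along the path, but either way the acyclicity of $G$ is exactly what prevents the error terms from compounding uncontrollably.
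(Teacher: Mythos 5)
Your overall frame (vertex-exposure Doob martingale, a per-step coupling bound, then the Azuma/McDiarmid lemma) is the same as the paper's, but the exposure order you choose is the reverse of the one that makes the argument work, and the property you invoke to control each step is not available. You reveal the root first and each parent before its children, and you justify the per-step bound by saying that fixing the parent's value ``severs the dependence between the subtree rooted at $v_k$ and the rest.'' That is a Markov-random-field--type conditional independence, and it does \emph{not} follow from Definition~\ref{dep graph}, which only gives \emph{unconditional} independence of non-adjacent sets; conditioning on a separator can create dependence. Concretely, take the star on $\{r,a,b\}$ with $A,B$ i.i.d.\ uniform bits and $R=A\oplus B$: this is a valid dependency graph (the only non-adjacent pair is $\{a\},\{b\}$, and $A\perp B$), yet given $R$ the variables $A$ and $B$ determine each other. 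With your order $(r,a,b)$ and $f(x)=x_a+x_b$ (so $c_a=c_b=1$, $c_r=0$), the step revealing $A$ has conditional range $2=c_a+c_b$, not $c_a+c_r=1$; and with $f(x)=x_a\oplus x_b$ the very first step has range $1$, while your accounting charges the root only $c_{\min}=0$. So neither ``the root contributes $c_{\min}$'' nor ``one $(c_i+c_j)$ term per parent edge'' holds for a root-first exposure, the discrepancy is not a bookkeeping issue (for a star with small $c_r$ the true per-step ranges force pairs of leaf coefficients, giving a strictly worse sum than the theorem's), and in plain Azuma you cannot amortize an oversized step against later ones: the claimed denominator is unreachable along this route.

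The paper's proof is exactly your plan with the order reversed: root the tree at the vertex with the minimum coefficient and expose vertices so that all descendants come \emph{before} their parent, the root last (\textsf{Rooted}/\textsf{Ordered}). Then when $X_i$ is revealed its entire fringe subtree $T_i$ is already exposed, and the statement that is actually implied by Definition~\ref{dep graph} (Lemma~\ref{lm:indepofxi}: $\X_{V(T_i)}$ is jointly independent of the unrevealed variables other than the parent together with the revealed variables outside $T_i$) shows that the conditional law of the future \emph{except the parent} does not depend on the value of $x_i$. The Marton-type coupling (Lemma~\ref{coupling}) therefore only resamples coordinate $p_i$, giving width $c_i+c_{p_i}$ for each non-root vertex and $c_n=c_{\min}$ for the root, which is precisely the stated denominator after summing squares over edges. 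If you wish to keep the root-first order, you would need an extra conditional-independence assumption that the theorem does not make; as written, your key severing step is false and the proof does not go through.
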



A simple extension of the proof leads to our second result, 
in which the dependency graph is a forest.
\begin{tm}
Let function $f: \Om\rightarrow \R$ be $\c$-Lipschitz 
and $\Om$-valued random vector $\X$ be $G$-dependent.
If $G$ is a disjoint union of trees $\{ T_i \}_{i\in[k]}$, 
then for $t > 0$,
\begin{align}
\P{ f(\X) - \E { f(\X) } \ge t }
&\le \Exp{ - \dfrac{2t^2}{ \sum^k_{i=1} c_{\min,i}^2 + \sum_{ \{i, j\} \in E(G) } ( c_i + c_j)^2 } },
\label{forest-ie}
\end{align}
where $c_{\min,i} := \min \{ c_j : j \in V(T_i) \}$ for all $i \in [k]$.
\label{forest}
\end{tm}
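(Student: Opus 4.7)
The plan is to adapt the Doob martingale proof of McDiarmid's inequality, choosing the revelation order so as to exploit the forest structure. First, root each tree $T_i$ at a vertex $r_i$ attaining the minimum $c_{r_i} = c_{\min, i}$, and fix any enumeration $\sigma_1, \ldots, \sigma_m$ of $V(G)$ that restricts to a post-order DFS on each rooted $T_i$, so that every non-root vertex $\sigma_j$ is revealed after all of its descendants in its own tree but before its parent $u_j$; the interleaving across different trees may be arbitrary. Let $M_j = \E{f(\X) \mid X_{\sigma_1}, \ldots, X_{\sigma_j}}$ with $M_0 = \E{f(\X)}$ and $M_m = f(\X)$ be the resulting Doob martingale, and write $P_j = \{\sigma_1, \ldots, \sigma_{j-1}\}$ and $R_j = \{\sigma_j, \ldots, \sigma_m\}$ for the past and future at step $j$.

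The core step is to bound the conditional range of each increment $M_j - M_{j-1}$. Let $\sigma_j \in V(T_i)$ and denote by $T_{\sigma_j} \subseteq T_i$ the subtree of $T_i$ rooted at $\sigma_j$; by the post-order choice, $T_{\sigma_j} \setminus \{\sigma_j\} \subseteq P_j$. The only edge of $G$ leaving $T_{\sigma_j}$ is $\{\sigma_j, u_j\}$ (absent when $\sigma_j = r_i$, and there are never edges between different trees of the forest), so $T_{\sigma_j}$ and $V(G) \setminus T_{\sigma_j} \setminus \{u_j\}$ are non-adjacent in $G$. By $G$-dependence, $\X_{T_{\sigma_j}}$ and $\X_{V(G) \setminus T_{\sigma_j} \setminus \{u_j\}}$ are jointly independent, and a direct factorization of the joint law yields the conditional independence $X_{\sigma_j} \perp \X_{F_j} \mid \X_{P_j}$, where $F_j = R_j \setminus \{\sigma_j, u_j\}$.

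Using this conditional independence, for any $x, x' \in \Omega_{\sigma_j}$ I couple the conditional distribution of $\X_{R_j \setminus \{\sigma_j\}}$ given $(\X_{P_j}, X_{\sigma_j} = x)$ with the one given $(\X_{P_j}, X_{\sigma_j} = x')$ so that $\X_{F_j}$ agrees under both realizations and only $X_{u_j}$ may differ; when $\sigma_j = r_i$ is a root, there is no $u_j$ and the entire $\X_{R_j \setminus \{\sigma_j\}}$ can be coupled identically. The Lipschitz hypothesis then forces
\[
\bigl| \E{f(\X) \mid \X_{P_j}, X_{\sigma_j} = x} - \E{f(\X) \mid \X_{P_j}, X_{\sigma_j} = x'} \bigr| \le c_{\sigma_j} + c_{u_j},
\]
with the $c_{u_j}$ term absent when $\sigma_j = r_i$. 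Hence the conditional range of $M_j - M_{j-1}$ is at most $c_{\sigma_j} + c_{u_j}$ for a non-root $\sigma_j$ and at most $c_{\min, i}$ when $\sigma_j = r_i$. Applying Azuma--Hoeffding and summing the squared ranges produces $\sum_{i=1}^k c_{\min, i}^2 + \sum_{\{u, v\} \in E(G)} (c_u + c_v)^2$ in the denominator, since each non-root vertex $\sigma_j$ is bijectively matched to the forest edge $\{\sigma_j, u_j\}$ and each tree contributes one root term; this matches \eqref{forest-ie}.

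The main obstacle is the coupling step: one must verify that $G$-dependence, which is a joint-independence hypothesis rather than a Markov-type property, nevertheless suffices to give $X_{\sigma_j} \perp \X_{F_j} \mid \X_{P_j}$ at every step. The key point is that in a forest every subtree $T_{\sigma_j}$ is connected to the rest of $G$ by the single edge $\{\sigma_j, u_j\}$, so the required joint-independence statement is read off directly from $G$-dependence and conditional independence then follows by straightforward factoring of the density.
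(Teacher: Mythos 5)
Your proposal is correct and follows essentially the same route as the paper: a vertex-exposure (Doob) martingale with each tree rooted at its minimum-coefficient vertex and revealed descendants-before-parents, a coupling in which only the parent variable can differ (justified by exactly the subtree-separation/conditional-independence argument the paper formalizes in its Lemmas), giving increment ranges $c_v + c_{p_v}$ for non-roots and $c_{\min,i}$ for roots, then Azuma/McDiarmid on the summed squared ranges. The arbitrary interleaving across trees is harmless, just as in the paper, because distinct trees are independent.
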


\begin{re}
If random variables $(X_1, \ldots, X_n)$ are independent,
then the empty graph $\overline{K}_n = ([n], \emptyset)$ is a valid dependency graphs for $(X_i)_{i\in[n]}$. 
In this case, inequality (\ref{forest-ie})
becomes the McDiarmid's inequality
(Theorem~\ref{McDiarmid's inequality}),
since each vertex is treated as a tree.

If all Lipschitz coefficients are of the same value $c$, then 
the denominator of the exponent
in (\ref{forest-ie}) becomes $ k c^2 + 4 (n-k) c^2 = (4n - 3k) c^2 $,
since the number of edges in the forest is $n - k$.
The denominator in Janson's bound (\ref{eq:janson}) is $2nc^2$,
since the fractional chromatic number of any tree is $2$.
Thus if $k \ge 2n/3$, then (\ref{forest-ie}) is better than Janson's bound in this case.
\label{re}
\end{re}

\subsection{Concentration of decomposable functions via fractional vertex coverings}
\label{frac}

For Lipschitz functions of general graph-dependent random variables, 
we give concentration results under certain decomposability constraints.

First we introduce the forest vertex covering and independent vertex covering of a graph.
Formally, given a graph $G$, we introduce the following.

\begin{enumerate}[label=(\alph*)]
\item A family $\{ S_{k} \}_{k}$ of subsets of $V(G)$ is a vertex cover of $G$ if $\bigcup S_{k} = V(G)$.

\item A family $\{ ( S_{k}, w_{k} ) \}_{k}$ of pairs $( S_{k}, w_{k} )$, where $S_{k} \subseteq V(G)$
and $w_{k} \in [0, 1]$ is a fractional vertex cover of $G$ 
if $\sum_{k: v \in F_{k}} w_{k} = 1$ for every $v \in V(G)$.

\item A fractional forest vertex cover $\{ ( F_{k}, w_{k} ) \}_{k}$ of $G$ is 
a fractional vertex cover
such that each set $F_{k}$ in it induces a forest of $G$.
We denote the set of (vertex sets of) disjoint trees in forest $F_{k}$ as $\T(F_k)$.
The set of all fractional forest vertex cover of graph $G$ is denoted as $ \textsf{FFC}(G) $.

\item A fractional independent vertex cover $\{ ( I_{k}, w_{k} ) \}_{k}$ of $G$ is 
a fractional vertex cover
such that $I_{k} \in \mathcal I(G)$ for every $k$.
The set of all fractional independent vertex cover of graph $G$ is denoted as $ \textsf{FIC}(G) $.
\end{enumerate}

Note that the fractional chromatic number $\chi_f(G)$ of graph $G$
is the minimum of $\sum_{k} w_{k}$ over $\textsf{FIC}(G)$ 
(see, for example, \cite{janson2004large}).

Next, we introduce the decomposable Lipschitz functions.
Given a graph $G$ on $n$ vertices and a vector $\c= (c_i)_{i \in [n]} \in \R_+^n$,
a function $f:\Om \rightarrow \R$ is forest-decomposable $\c$-Lipschitz
with respect to graph $G$ if for all $\x = (x_1, \ldots, x_n) \in \Om$
and for all $\{ ( F_{k}, w_{k} ) \}_{k} \in \textsf{FFC}(G)$,
there exist $(c_{i})_{i \in F_{k}}$-Lipschitz functions $\{ f_{k}: \Om_{F_{k}} \rightarrow \R \}_{k}$ such that
$
f( \x ) = \sum_{ k } w_k f_{k}( \x_{F_k} ).
$

\begin{tm}
Let $\Om$-valued random vector $\X$ be $G$-dependent
and function $f: \Om\rightarrow \R$ be forest-decomposable $\c$-Lipschitz with respect to $G$.
Then for every $t > 0$, 
\begin{align}
\P{ f(\X) - \E { f(\X) } \ge t }
\le \Exp{ - \dfrac{2t^2}{ D(G, \c) } },
\end{align}
where 
\begin{align}
D(G, \c) := \min_{\{ ( F_{k}, w_{k} ) \}_{k} \in \textsf{FFC}(G)} 
\left( \sum_{k} w_k \sqrt{ \sum_{\{i, j\} \in E(G[F_k])} ( c_i + c_j )^2 
+ \sum_{T \in \T(F_k)} c_{\min,k,T}^2 } \right)^2,
\label{T}
\end{align}
and $c_{\min,k,T} := \min \{ c_i : i \in T\}$ for all $T \in \mathcal T(F_k)$.
\label{decom}
\end{tm}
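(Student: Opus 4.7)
The plan is to reduce Theorem~\ref{decom} to Theorem~\ref{forest} applied to each piece of the forest-decomposition of $f$, and then to combine the resulting moment generating function (MGF) bounds via a weighted Hölder inequality with optimised exponents. Fix any $\{(F_k, w_k)\}_k \in \textsf{FFC}(G)$ together with the associated $(c_i)_{i \in F_k}$-Lipschitz functions $\{f_k\}_k$ satisfying $f(\x) = \sum_k w_k f_k(\x_{F_k})$. Since $\X$ is $G$-dependent, the restriction $\X_{F_k}$ is automatically $G[F_k]$-dependent, and by the definition of $\textsf{FFC}(G)$ the induced subgraph $G[F_k]$ is a forest whose trees are exactly the members of $\T(F_k)$; hence Theorem~\ref{forest} applies to $f_k(\X_{F_k})$. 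Because its proof proceeds by Chernoff's method, it actually supplies the stronger MGF bound
\[
\E{\exp(\mu Y_k)} \le \Exp{\mu^2 \sigma_k^2 / 8} \qquad \text{for every } \mu \in \R,
\]
where $Y_k := f_k(\X_{F_k}) - \E{f_k(\X_{F_k})}$ and $\sigma_k^2 := \sum_{T \in \T(F_k)} c_{\min,k,T}^2 + \sum_{\{i,j\} \in E(G[F_k])} (c_i + c_j)^2$.

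Since $f(\X) - \E{f(\X)} = \sum_k w_k Y_k$ is a weighted sum of highly dependent summands (the $Y_k$ share coordinates of $\X$), I cannot simply multiply MGFs. Instead I invoke the generalised Hölder inequality: for any Hölder exponents $p_k > 0$ with $\sum_k 1/p_k = 1$ and every $\lambda > 0$,
\[
\E{\exp(\lambda \sum_k w_k Y_k)} \le \prod_k \E{\exp(\lambda w_k p_k Y_k)}^{1/p_k} \le \Exp{ \tfrac{\lambda^2}{8} \sum_k w_k^2 p_k \sigma_k^2 }.
\]
A Cauchy-Schwarz inequality yields $\sum_k w_k^2 p_k \sigma_k^2 \ge (\sum_k w_k \sigma_k)^2$, with equality at $p_k = (\sum_l w_l \sigma_l)/(w_k \sigma_k)$; substituting those optimal $p_k$ and optimising $\lambda > 0$ via Chernoff gives $\P{f(\X) - \E{f(\X)} \ge t} \le \Exp{-2 t^2 / (\sum_k w_k \sigma_k)^2}$. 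Taking the infimum over all $\{(F_k, w_k)\}_k \in \textsf{FFC}(G)$ recovers $D(G, \c)$ from (\ref{T}).

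The main subtlety I anticipate is twofold. First, I need the MGF version of Theorem~\ref{forest}, not merely the tail bound, with the clean sub-Gaussian constant $\sigma_k^2/8$; this is implicit in any Chernoff-based proof but has to be made explicit to be cited here. Second, the Hölder exponents $p_k$ must be tuned to produce exactly the target variance proxy $(\sum_k w_k \sigma_k)^2$ rather than a looser quantity, which is precisely what the Cauchy-Schwarz optimisation above achieves — it is the one genuinely nonobvious move in the argument. With those two points in place, the remaining work — applying Theorem~\ref{forest} on each $F_k$, combining MGFs via Hölder, Chernoff, and minimising over $\textsf{FFC}(G)$ — is routine.
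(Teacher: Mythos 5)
Your proposal is correct and follows essentially the same route as the paper: decompose $f$ along a fractional forest cover, extract the sub-Gaussian MGF bound $\exp(\mu^2\sigma_k^2/8)$ for each piece from the machinery behind Theorem~\ref{forest} (Lemmas~\ref{McDiarmid's Lemma} and~\ref{martingale differences lemma}), decouple the sum in the exponent with optimally tuned weights, then apply Chernoff and minimize over $\textsf{FFC}(G)$. The only difference is that you decouple via generalized H\"older with exponents $p_k = \left(\sum_l w_l\sigma_l\right)/(w_k\sigma_k)$ while the paper uses Jensen's inequality with weights $z_k = 1/p_k$; these are equivalent choices leading to the same bound $\exp\left(s^2\left(\sum_k w_k\sigma_k\right)^2/8\right)$.
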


\begin{re}
An upper bound for $D(G, \c)$ via fractional chromatic number
follows an approach by Janson \cite{janson2004large}.
Let $\{ ( I_{k}, w_{k} ) \}_k \in \textsf{FIC}(G)$ be 
a fractional independent vertex cover of $G$.
Since
$\textsf{FIC}(G) \subseteq \textsf{FFC}(G)$ for all graph $G$, 
then
\begin{align*}
D(G, \c) 
\le 
\left( \sum_{k} w_k \sqrt{
\sum_{\{i, j\} \in E(G[I_k])} ( c_i + c_j )^2 + \sum_{T \in \T(I_k)} c_{\min,k,T}^2} \right)^2 
=
\left( \sum_{k} w_k \sqrt{ \sum_{i \in I_k} c_{i}^2} \right)^2,
\end{align*}
where the equality is because 
$E(G[I_k]) = \emptyset$,
since $I_{k} \in \mathcal I(G)$ for every $k$.
Next by the Cauchy-Schwarz inequality,
\begin{align*}
\left( \sum_{k} w_k \sqrt{ \sum_{i \in I_k} c_{i}^2} \right)^2
\le \left( \sum_{k} w_k \right)
\left( \sum_{k} w_k \sum_{i \in I_k} c_i^2 \right)
= \left( \sum_{k} w_k \right)
\left( \sum_{i \in V(G)} \sum_{k: i \in I_k} w_i c_i^2 \right) 
= \left( \sum_{k} w_k \right)
\sum_{i \in V(G)} c_i^2.
\end{align*}
Then by choosing $\{ ( F_{k}, w_{k} ) \}_{k} \in \textsf{FIC}(G)$
with $\sum_{k} w_{k} = \chi_f(G)$,
we have
$
D(G, \c)
\le \chi_f(G) \| \c \| ^2_2,
$
which is exactly the denominator in Janson's bound in (\ref{eq:janson}).
\label{jan}
\end{re}

\subsubsection{Concentration of the sum of graph-dependent random variables}

Here we give an application.
This improves Janson's
Hoeffding-type inequality for graph-dependent random variables.

\begin{co}
Let random vector $\X$ be $G$-dependent.
If for every $i \in V(G)$, 
random variable $X_i$ takes values in a real interval of length $c_i \ge 0$,
then for every $t > 0$, 
\begin{align}
\P{ \sum_{i \in V(G)} X_i - \E { \sum_{i \in V(G)} X_i } \ge t }
\le \Exp{ - \dfrac{2t^2}{ D(G, \c) } },
\end{align}
where $\c= ( c_v )_{v \in V(G)}$ and $ D(G, \c) $ is defined by $(\ref{T})$.
\label{sum}
\end{co}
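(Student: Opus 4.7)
The plan is to reduce Corollary~\ref{sum} to Theorem~\ref{decom} by showing that the sum $f(\x) = \sum_{i \in V(G)} x_i$ is forest-decomposable $\c$-Lipschitz with respect to $G$. First, I would verify the Lipschitz condition itself: since each $X_i$ takes values in an interval $\Omega_i \subseteq \R$ of length $c_i$, any two realizations $x_i, x'_i \in \Omega_i$ with $x_i \ne x'_i$ satisfy $|x_i - x'_i| \le c_i$, whence
\[
| f(\x) - f(\x') | \le \sum_{i \in V(G)} |x_i - x'_i| \le \sum_{i \in V(G)} c_i \I{ x_i \ne x'_i },
\]
so $f$ is $\c$-Lipschitz on $\Om$.

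Next, I would exhibit the required decomposition. For an arbitrary fractional forest vertex cover $\{ ( F_{k}, w_{k} ) \}_{k} \in \textsf{FFC}(G)$, set $f_k(\x_{F_k}) := \sum_{i \in F_k} x_i$. By the same argument as above, each $f_k$ is $(c_i)_{i \in F_k}$-Lipschitz on $\Om_{F_k}$. Swapping the order of summation and using the defining property $\sum_{k : i \in F_k} w_k = 1$ for each $i \in V(G)$ gives
\[
\sum_{k} w_k f_k( \x_{F_k} )
= \sum_{k} w_k \sum_{i \in F_k} x_i
= \sum_{i \in V(G)} x_i \sum_{k : i \in F_k} w_k
= \sum_{i \in V(G)} x_i
= f(\x),
\]
so $f$ meets the forest-decomposable $\c$-Lipschitz definition for every choice of cover in $\textsf{FFC}(G)$.

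With both conditions verified, the conclusion follows immediately by applying Theorem~\ref{decom} to $f$ and $\X$; the resulting denominator in the exponent is exactly $D(G,\c)$, since the decomposition above is valid for every $\{(F_k,w_k)\}_k \in \textsf{FFC}(G)$ and thus the minimum defining $D(G,\c)$ can be taken on the right-hand side. There is no real obstacle: the entire argument is a verification that the identity function-as-sum interacts trivially with any fractional cover, so the work of handling the underlying dependency structure has already been absorbed into Theorem~\ref{decom}. The only point requiring a small amount of care is confirming that the Lipschitz coefficients of each local summand $f_k$ are exactly the restrictions $(c_i)_{i \in F_k}$ of the global vector $\c$, which is needed for the bound in~(\ref{T}) to apply with these $c_i$'s.
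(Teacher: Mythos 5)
Your proposal is correct and follows essentially the same route as the paper: you show $\sum_i x_i$ is forest-decomposable $\c$-Lipschitz by taking $f_k(\x_{F_k}) = \sum_{i \in F_k} x_i$ and swapping the order of summation via $\sum_{k:\, i \in F_k} w_k = 1$, then invoke Theorem~\ref{decom}. The only difference is that you spell out the verification that each $f_k$ is $(c_i)_{i \in F_k}$-Lipschitz from the interval-boundedness of the $X_i$, which the paper leaves implicit.
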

\begin{proof}
It suffices to show that the summation is forest-decomposable $\c$-Lipschitz.
Since for every $\{ ( F_{k}, w_{k} ) \}_{k} \in \textsf{FFC}(G)$, we have
$
\sum_{i \in V(G)} X_i
= \sum_{i \in V(G)} \sum_{k: i \in F_k} w_i X_i
= \sum_{k} w_i \sum_{i \in F_k} X_i,
$
then Corollary \ref{sum} follows from Theorem \ref{decom}.
\end{proof}

Next we give an example in which our bound is better than Janson's.
\begin{ex}

Let $( X_1, X_2, X_3 )$ be dependent random variables with $K_3$ as their dependency graph,
and $( X_i )_{4 \le i \le 9} $ be independent variables that are also independent from $( X_i )_{i \in [3]}$.
Then the vertex-disjoint union of $K_3$ and $6$ copies of $K_1$ 
is a dependency graph for $( X_i )_{i \in [9]}$.
If for every $i \in [9]$, 
random variable $X_i$ takes values in a real interval of length $c \ge 0$,
then for every $t > 0$, Janson's bound gives
\begin{align*}
\P{ \sum_{i \in [9]} X_i - \E { \sum_{i \in [9]} X_i } \ge t }
\le \Exp{ - \dfrac{2 t^2}{ 27 c^2 } },
\end{align*}
since $\chi_f = 3$.
We give a slightly better bound 
\begin{align*}
\P{ \sum_{i \in [9]} X_i - \E { \sum_{i \in [9]} X_i } \ge t }
\le \Exp{ - \dfrac{ 8t^2}{ 81 c^2 } }.
\end{align*}
This is by giving equal weight $1/2$ to vertex covers $F_1 = \{ 1, 2, 4, 5, 6, 7\}$,
$F_2 = \{ 1, 3, 4, 5, 8, 9 \}$ and $F_3 = \{ 2, 3, 6, 7, 8, 9 \}$.
Then the subgraph induced by $F_k$ in $G$ is 
a vertex-disjoint union of $K_2$ and $4$ copies of $K_1$
for all $k \in [3]$, 
thus we have
$
\left( \sum_{k} w_k \sqrt{
\sum_{\{i, j\} \in E(G[F_k])} ( c_i + c_j )^2 + \sum_{T \in \T(F_k)} c_{\min,k,T}^2} \right)^2
= \left( \frac{3c}{2} \sqrt{ 2^2 + 1 + 4 } \right)^2 =  81 c^2/4.
$

\end{ex}

%
%
%

\subsection{Concentration under local dependence}
\label{sec-m}

A sequence of random variables $( X_i )_{i = 1}^n$ is said to be $f(n)$-dependent 
if subsets of variables separated by some distance $f(n)$ are independent.
This was introduced by Hoeffding and Robbins \cite{hoeffding1948central}
and has been studied extensively (see, for example, \cite{stein1972bound, chen1975poisson}).
This is usually the canonical application for the results based on the dependency graph model.
A special case of $f(n)$-dependence when $f(n) = m$ is the following $m$-dependent model.

\begin{df}[$m$-dependence~\cite{hoeffding1948central}]
A sequence of random variables $( X_i )_{i = 1}^n$ is $m$-dependent for some $m \ge 1$
if $( X_j )_{j = 1}^i$ and $( X_j )_{j = i+m+1}^n$
are independent for all $i  > 0$.
\label{m-dependence result}
\end{df}

The $m$-dependent sequences usually appear as block factors. 
Let $k \in \mathbb N$, the sequence $( X_i )$ is an $k$-block factor 
if there are an independent and identically distributed sequence $( Y_j )_{-\infty}^\infty$ 
and a function $g: \R^k \rightarrow \R$ such that 
$X_i = g(Y_i, \ldots, Y_{i + k - 1})$.
Note that every such sequence $( X_i )$ is $(k - 1)$-dependent,
and there are $m$-dependent sequences that are not block factors, see \cite{burton19931}.


\begin{co}
Let $f: \Om\rightarrow \R$ be $\c$-Lipschitz 
and $\Om$-valued random vector $\X$ be $m$-dependent.
Then for every $t > 0$, 
\begin{align}
\P{ f(\X) - \E { f(\X) } \ge t }
\le \Exp{ - \dfrac{ 2t^2 }
{ \sum_{i \in \left[ \left\lfloor\frac{n}{m}\right\rfloor \right] }
\left( \sum_{ j \in B_i \cup B_{i+1} }  c_j \right)^2
+ \min_{i \in \left[ \left\lceil\frac{n}{m}\right\rceil \right] }
\left( \sum_{ j \in B_i }  c_j \right)^2  } },
\label{m-ie}
\end{align}
where for every $j \in \left[ \left\lfloor\frac{n}{m}\right\rfloor \right]$,
\begin{align}
B_j := \{ k:  (j-1)m +1 \le k \le jm \},
\quad\text{ and }\quad
B_{\left\lceil\frac{n}{m}\right\rceil} 
:= [n] \setminus \cup_{j \in \left[ \left\lfloor\frac{n}{m}\right\rfloor \right]} B_j.
\label{B}
\end{align}
\label{m}
\end{co}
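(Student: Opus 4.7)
The plan is to reduce the $m$-dependent case to the tree case (Theorem~\ref{tree}) by collapsing each consecutive block of $m$ variables into a single ``super-variable'', so that the blocks form a sequence whose dependency graph is a path.

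Concretely, partition $[n]$ into the blocks $B_1,\ldots,B_K$ with $K=\lceil n/m\rceil$ as defined in (\ref{B}), and set $Y_j:=\X_{B_j}$, viewed as a random element of $\Omega_{B_j}$. By the $m$-dependence assumption, any two blocks $B_p,B_q$ with $|p-q|\ge 2$ are separated in $[n]$ by at least $m+1$ positions, so the corresponding $Y_p,Y_q$ are independent; more generally, for disjoint $S,T\subseteq[K]$ that are non-adjacent in the path $P_K$ on vertex set $[K]$, the two collections $\{Y_p\}_{p\in S}$ and $\{Y_q\}_{q\in T}$ are independent, so $P_K$ is a dependency graph for $(Y_j)_{j\in[K]}$. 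Next, define $g:\prod_{j\in[K]}\Omega_{B_j}\to\R$ by $g(y_1,\ldots,y_K):=f(\x)$, where $\x$ is the concatenation of the $y_j$. For any two points differing only in the $j$-th block, the $\c$-Lipschitz property of $f$ applied coordinate-wise gives
\[
|g(y)-g(y')|\le\sum_{j\in[K]}\mathbf 1_{\{y_j\ne y_j'\}}\sum_{i\in B_j}c_i,
\]
so $g$ is $\mathbf C$-Lipschitz with $C_j:=\sum_{i\in B_j}c_i$.

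The last step is to apply Theorem~\ref{tree} to $g$, $(Y_j)_{j\in[K]}$, and the path $P_K$. Since $E(P_K)=\{\{j,j+1\}:1\le j\le K-1\}$ and $C_{\min}=\min_{j\in[K]}\sum_{i\in B_j}c_i$, the resulting denominator is
\[
\min_{j\in[K]}\Bigl(\sum_{i\in B_j}c_i\Bigr)^2+\sum_{j=1}^{K-1}\Bigl(\sum_{i\in B_j\cup B_{j+1}}c_i\Bigr)^2,
\]
which matches (\ref{m-ie}) after identifying $K=\lceil n/m\rceil$ and $K-1=\lfloor n/m\rfloor$ (in the edge case $m\mid n$ we may treat $B_{\lceil n/m\rceil}=\emptyset$, so the extra summand in (\ref{m-ie}) is $(\sum_{i\in B_{n/m}}c_i)^2$ and only weakens the bound).

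The conceptually delicate step—hence the main obstacle—is the verification that $P_K$ is a genuine dependency graph for $(Y_j)$ in the strong sense of Definition~\ref{dep graph}, and not merely a pairwise-independence structure. This must be checked for arbitrary disjoint non-adjacent $S,T\subseteq[K]$, relying on the $m$-dependence viewed in its standard ``collection-wise'' form (as stated in the paragraph introducing Subsection~\ref{sec-m}). The translation of Lipschitz constants and the algebraic unpacking of the denominator are routine once the block reduction is in place.
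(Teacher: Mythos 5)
Your proposal is correct and follows essentially the same route as the paper's proof: group the variables into the blocks $B_1,\ldots,B_{\lceil n/m\rceil}$, observe that the blocks are path-dependent with block Lipschitz coefficients $\sum_{i\in B_j}c_i$ (triangle inequality), and apply Theorem~\ref{tree} to the path. The point you flag as delicate---that the path is a dependency graph in the strong sense of Definition~\ref{dep graph}---is likewise only asserted, not verified, in the paper, so your treatment matches the paper's level of detail.
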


\begin{re}
Corollary \ref{m} improves the following bound obtained by Paulin 
in \cite[Example 2.14]{paulin2015concentration}: \\
\begin{align*}
\P{ f(\X) - \E { f(\X) } \ge t }
\le \Exp{ - \dfrac{ 2t^2 }
{ \sum_{i \in \left[ \left\lfloor\frac{n}{m}\right\rfloor \right] }
\left( \sum_{ j \in B_i \cup B_{i+1} }  c_j \right)^2
+ \left( \sum_{ j \in B_{  \left\lceil\frac{n}{m}\right\rceil } }  c_j \right)^2 } },
\end{align*}
where the second summand in the denominator of the exponent is without taking minimum over blocks.
Note that Corollary \ref{m} does not assume stationarity,
and it may be slightly improved by choosing better grouping schemes.

If all Lipschitz coefficients are of the same value $c$ and w.l.o.g. assume that $n$ is divisible by $m$, 
then the denominator of the exponent
in \eqref{m-ie} becomes $(2mc)^2 (n/m - 1)+m^2c^2 \le 4mnc^2$,
thus we have 
\begin{align}
\P{ f(\X) - \E { f(\X) } \ge t }
\le \Exp{ - \dfrac{ t^2 } { 2mnc^2 } },
\end{align}
which is $4m$ times worse than the independent case, see \eqref{mc-ie}.
\end{re}


\begin{proof}[Proof of Corollary~\ref{m}]
A dependency graph for 
$m$-dependent random variables $( X_i )_{i = 1}^n$
is
$ D_{n,m} = 
( [n], \{ \{ i, j \} : i, j \in [n], | i - j | \in [m] \} )$.
Note that $D_{n,m}$ is not a forest,
nevertheless, via the following transformation, we can apply our results.
We group the $m$-dependent random variables $( X_i )_{i = 1}^n$ into 
$\left\lceil\frac{n}{m}\right\rceil$ blocks 
such that each block $( X_i )_{i \in B_j}$ contains $m$ consecutive random variables 
except for the last one, which might contain less than $m$ ones,
where $( B_j )$ are defined in (\ref{B}).
The resulting dependency graph for the blocks 
$ ( ( X_i )_{i \in B_j} : j \in \left\lceil\frac{n}{m}\right\rceil )$ 
is a path $P$ on $\left\lceil\frac{n}{m}\right\rceil$ vertices. 
Since the Lipschitz coefficient $\widetilde c_j$ of each block $( X_i )_{i \in B_j}$ is at most 
$ \sum_{i \in B_j} c_i $ 
due to the triangle inequality, then we have
$\widetilde c_{\min}^2 + \sum_{ \{i, j\} \in E(P) } ( \widetilde c_i + \widetilde c_j )^2
= \sum_{i \in \left[ \left\lfloor\frac{n}{m}\right\rfloor \right] }
\left( \sum_{ j \in B_i \cup B_{i+1} }  c_j \right)^2
+ \min_{i \in \left[ \left\lceil\frac{n}{m}\right\rceil \right] }
\left( \sum_{ j \in B_i }  c_j \right)^2$.
The result then follows from the Theorem~\ref{tree}.
\end{proof}

\section{Proofs}

We first introduce some additional notations.
Given a graph $G$, for every vertex $v \in V(G)$, 
let $N_G(v) := \{v \in V(G): \{ u, v \} \in E(G)\}$ 
denote the neighbours of $v$, and $N^{+}_G(v) := N_G(v) \cup \{ v \}$ 
denote the inclusive neighborhood.
The neighborhood of a set of vertices $V$ is $N_G^{+}(V) := \cup_{v \in V} N_G^{+} (v)$,
and the neighbours of $V$ is $N_G(V) := N_G^{+}(V) \setminus V $.
The subscript $G$ might be omitted if it is clear from context. 
Given $\Om = \prod_{i\in[n]} \Omega_i$,
let $\x = (x_1, \ldots, x_n)$ 
be an arbitrary vector in $\Om$
and $\x^{(i)} := (x_1, \ldots, x_{i-1}, x'_i, x_{i+1}, \ldots, x_n)$,
where $x'_i \in \Omega_i$.

For tree-dependent random variables $\X$, without loss of generality,
we assume that the dependency tree $G$ satisfies the following assumptions:

\begin{description}
\item[\textsf{Rooted}]: $G$ is rooted at the vertex $n$ and $c_n=c_{\min}$.
\item[\textsf{Ordered}]: for every pair of vertices $i, j \in V(G)$, 
$j$ is a descendant of $i$ only if $j < i$.
\end{description}

Notice the above assumptions are just for the simplicity 
of the statement of the vertex exposure martingale in the proofs,
and there is no such requirement for the ordering of the vertices.
Such ordering of tree vertices exists and can be obtained via a topological sort.

We briefly explain the idea before the formal proof.
The proof of Theorem \ref{tree} relies on Lemma \ref{McDiarmid's Lemma}, 
which states that the small deviation of 
$
\E { f(\X) \m \X_{[i-1]} = \x_{[i-1]}, X_i = x_i }
$
with respect to $x_i \in \Omega_i$ for all $i \in [n]$
leads to the concentration of $f(\X)$ around its expectation. 
Our task is thus to bound the difference of the conditional expectations
$ \E { f(\X) \m \X_{[i-1]} = \x_{[i-1]}, X_i = \alpha }
- \E { f(\X) \m \X_{[i-1]} = \x_{[i-1]}, X_i = \beta }$
for any $\alpha, \beta \in \Omega_i$  and given $\X_{[i-1]}$ (Lemma \ref{martingale differences lemma}).
This is by the coupling constructions, namely,
jointly distributed variables $(\Y^{(i)}, \Z^{(i)})$ whose marginal distributions are 
distributions of $\X$ conditioned on 
$ \{ \X_{[i-1]} = \x_{[i-1]}, X_i = x_i \} $ 
and on 
$ \{ \X_{[i-1]} = \x_{[i-1]}, X_i = \x^{(i)}_i \}$, 
respectively.
Hence, the main part of the proof is to construct such
recursive couplings of the conditional probability distribution (Lemma \ref{coupling}) 
whose feasibility relies on the independence among $\X$ (Lemma \ref{lm:indepofxi}).

First of all, recall a lemma by McDiarmid. By this lemma, it suffices to bound the deviation of $\E { f(\X) \m \X_{[i-1]} = \x_{[i-1]}, X_i = x_i } $ 
with respect to $x_i \in \Om_i$ for all $i \in [n]$. 

\begin{lm}[\cite{mcdiarmid1989method}]
If for every $i \in [n]$, $\om_{[i-1]} \in \Om_{[i-1]}$,
there is a constant $c_i\ge 0$ such that 
\begin{equation}
\sup_{\alpha \in \Omega_i} \E { f(\X) \m \X_{[i-1]} = \om_{[i-1]} , X_i=\alpha }
- \inf_{\beta \in \Omega_i} \E { f(\X) \m \X_{[i-1]} = \om_{[i-1]} , X_i=\beta } \le c_i,
\end{equation}
then for $s > 0$,
\begin{align}
\E{ \Exp{ s( f(\X) - \E { f( \X ) } )} }
\le \Exp{ \frac{ s^2 }{ 8 } \sum c_i^2 }.
\label{mgf}
\end{align}
Moreover, the bound on moment-generating function (\ref{mgf}) implies that for $t > 0$,
\begin{align}
\P{ f(\X) - \E { f( \X ) } \ge t }
\le \Exp{ - \dfrac{2t^2}{ \sum_{i=1}^n c_i^2 } }.
\end{align}
\label{McDiarmid's Lemma}
\end{lm}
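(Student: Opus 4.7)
The plan is to establish this classical McDiarmid lemma via the Doob (vertex exposure) martingale together with Hoeffding's lemma. Define $M_i := \E{f(\X) \m \X_{[i]}}$ for $i = 0, 1, \ldots, n$, so that $M_0 = \E{f(\X)}$ and $M_n = f(\X)$. Telescoping yields the decomposition $f(\X) - \E{f(\X)} = \sum_{i=1}^n D_i$ with martingale differences $D_i := M_i - M_{i-1}$.

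The key step is to show that, conditional on $\X_{[i-1]} = \om_{[i-1]}$, the increment $D_i$ is bounded in an interval of length at most $c_i$. Writing $g_i(\om_{[i-1]}, x) := \E{f(\X) \m \X_{[i-1]} = \om_{[i-1]}, X_i = x}$, we have $M_i = g_i(\X_{[i-1]}, X_i)$ while $M_{i-1}$ equals its conditional average over $X_i$. Hence, given $\X_{[i-1]} = \om_{[i-1]}$, the value $D_i$ lies in the interval $[\inf_\beta g_i(\om_{[i-1]}, \beta) - M_{i-1},\; \sup_\alpha g_i(\om_{[i-1]}, \alpha) - M_{i-1}]$, whose length is at most $c_i$ by the standing hypothesis of the lemma.

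Since $D_i$ is conditionally centred and conditionally bounded in an interval of length $c_i$, Hoeffding's lemma gives $\E{e^{s D_i} \m \X_{[i-1]}} \le \Exp{s^2 c_i^2 / 8}$ almost surely. Iterating via the tower property,
\begin{align*}
\E{\Exp{s \sum_{i=1}^n D_i}}
= \E{\Exp{s \sum_{i=1}^{n-1} D_i} \cdot \E{e^{s D_n} \m \X_{[n-1]}}}
\le \Exp{\tfrac{s^2 c_n^2}{8}} \E{\Exp{s \sum_{i=1}^{n-1} D_i}},
\end{align*}
where we pulled the $\X_{[n-1]}$-measurable factor outside the inner conditional expectation. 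Induction on $n$ then yields the moment-generating function bound~(\ref{mgf}).

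Finally, Markov's inequality gives $\P{f(\X) - \E{f(\X)} \ge t} \le e^{-st} \E{\Exp{s(f(\X) - \E{f(\X)})}}$; substituting the MGF bound and optimising over $s > 0$ at $s = 4t / \sum_{i=1}^n c_i^2$ produces the exponent $-2t^2 / \sum_{i=1}^n c_i^2$, as claimed. The only nontrivial ingredient here is Hoeffding's lemma itself; everything else is routine telescoping. The mild subtlety to watch for is the measurability of the essential supremum and infimum in the bound on $D_i$, but this causes no real obstacle under our Polish space assumption on each $\Omega_i$.
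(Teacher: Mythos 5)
Your proof is correct, and it is essentially the classical argument behind the cited result: the paper does not prove Lemma~\ref{McDiarmid's Lemma} itself but imports it from McDiarmid~\cite{mcdiarmid1989method}, whose method of bounded differences is exactly your Doob (vertex-exposure) martingale decomposition, the conditional interval bound of length $c_i$ on each increment, Hoeffding's lemma, the tower property, and Markov's inequality optimised at $s=4t/\sum_i c_i^2$. Note also that your argument correctly uses no independence assumption, which is what makes the lemma applicable in the paper's dependent setting.
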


For every non-root vertex $i\in V(G)$, let $p_i$ be the parent vertex of $i$. 
For the rest of the section, define $S_i := [i+1,n] \setminus \{ p_i \}$, 
where $[j, k]$ stands for the integer set $\{j, \ldots, k\}$ for all $j < k$.
The following lemma indicates that the distribution of $\X_{S_i}$ 
is independent of the realization of $X_i$ when $\X_{[i-1]}$ is given.

\begin{lm}
For every $i \in [n - 1]$, $\om_{S_i} \in \Om_{S_i}$, we have
\begin{align*}
\P{ \X_{S_i} = \om_{S_i} \m \X_{[i]} = \x_{[i]} } 
= \P{ \X_{S_i} = \om_{S_i} \m \X_{[i]} = \x^{(i)}_{[i]}}.
\end{align*}
\label{lm:indepofxi}
\end{lm}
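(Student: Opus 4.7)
The plan is to reduce the lemma to a single application of the $G$-dependence definition. The key observation is the ``single bottleneck'' property of a rooted tree: the subtree $T_i$ rooted at $i$ is connected to the rest of $G$ only through the edge $\{i, p_i\}$.

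First I would split the conditioning set $[i-1]$ into $D_i := V(T_i) \setminus \{i\}$, the descendants of $i$, and $C_i := [i-1] \setminus D_i$. The \textsf{Ordered} assumption (descendants have smaller indices) guarantees $D_i \subseteq [i-1]$, so this decomposition is well-defined. Setting $U := \{i\} \cup D_i = V(T_i)$, I would verify that every neighbour of $U$ outside $U$ equals $p_i$: any tree-edge with exactly one endpoint in $U$ must be the unique edge leaving the subtree $T_i$, namely $\{i, p_i\}$. Since $p_i > i$ forces $p_i \notin C_i$ and $p_i \notin S_i$ by definition of $S_i$, the sets $U$ and $C_i \cup S_i$ are disjoint and non-adjacent in $G$. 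Definition~\ref{dep graph} then yields that $\X_U$ is independent of $\X_{C_i \cup S_i}$.

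From this independence the conclusion follows by a short computation. Writing $\X_{[i]} = (\X_U, \X_{C_i})$, the joint law of $(\X_U, \X_{C_i}, \X_{S_i})$ factors as $\P{\X_U = \x_U} \cdot \P{\X_{C_i} = \x_{C_i}, \X_{S_i} = \om_{S_i}}$, and dividing by $\P{\X_U = \x_U} \cdot \P{\X_{C_i} = \x_{C_i}}$ gives $\P{\X_{S_i} = \om_{S_i} \m \X_{[i]} = \x_{[i]}} = \P{\X_{S_i} = \om_{S_i} \m \X_{C_i} = \x_{C_i}}$. The right-hand side depends only on $\x_{C_i}$, which is a subvector of $\x_{[i-1]}$; replacing $x_i$ by $x'_i$ leaves $\x_{C_i}$ unchanged, so the same value is obtained for $\x^{(i)}_{[i]}$, proving the lemma.

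The only non-routine step is verifying the structural claim that the only outside neighbour of $U$ is $p_i$; once that is in place the rest is a direct application of the $G$-dependence property together with elementary manipulation of conditional probabilities. For Polish $\Omega_i$ the displayed identities should be read as equalities of regular conditional probabilities, handled by the standard disintegration, but no new idea is required.
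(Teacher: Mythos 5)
Your proposal is correct and follows essentially the same route as the paper: your structural claim that the only outside neighbour of $U = V(T_i)$ is $p_i$ is exactly the paper's Observations 1 and 2, and your factorization argument is the paper's derivation that conditioning on $\X_{V(T_i)}$ can be dropped, giving independence of $\X_{V(T_i)}$ from $\{\X_{S_i}, \X_{[i-1]\setminus V(T_i)}\}$ and hence the stated identity.
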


\begin{proof}
Let $T_i$ be the subtree rooted at vertex $i$ of the dependency tree $G$
(such $T_i$ is also called fringe subtree in the literature).
Since $G$ is assumed to be \textsf{Ordered}, we have $V(T_i) \subseteq [i]$,
and $[i] = V(T_i) \cup ( [i-1] \setminus V(T_i) )$.
We will actually show stronger results:
$\X_{V(T_i)}$ is independent of $\{\X_{S_i},\X_{[i-1]\setminus V(T_i)}\}$, 
which follows from the following two observations:

\noindent
\textbf{Observation 1}: $N^+_G(T_i) \cap ( [i-1] \setminus V(T_i) )= \emptyset$.
Since $G$ is a dependency tree, then $N^+_G(T_i) = V(T_i) \cup \{ p_i \}$,
and $p_i \in [i+1, n]$
because $G$ is \textsf{Ordered}, then $p_i \not\in [i-1] \setminus V(T_i)$.
Thus $N^+_G(T_i) \cap ( [i-1] \setminus V(T_i) ) = \emptyset$.

\noindent
\textbf{Observation 2}: $N^+_G(T_i) \cap S_i = \emptyset$. 
From observation 1, $N^+_G(T_i) = V(T_i) \cup \{ p_i \}$.
Then observation 2 follows from the definition $S_i = [i+1,n] \setminus \{ p_i \}$.

Observations $1$ and $2$ indicate that 
$\X_{V(T_i)}$ is independent of $\{\X_{S_i},\X_{[i-1]\setminus V(T_i)}\}$,
due to the definition of the dependency graphs in Definition \ref{dep graph}.
Then
\begin{align*}
\P{ \X_{S_i} = \om_{S_i} \m \X_{[i-1]\setminus V(T_i)} = \x_{[i-1]\setminus V(T_i)} } 
&= \P{ \X_{S_i} = \om_{S_i} \m 
\X_{[i-1]\setminus V(T_i)} = \x_{[i-1]\setminus V(T_i)}, \X_{V(T_i)} = \x_{V(T_i)} } \\
&= \P{ \X_{S_i} = \om_{S_i} \m \X_{[i]} = \x_{[i]}} .
\end{align*}
Similarly, we also have 
\begin{align*}
& \P{ \X_{S_i} = \om_{S_i} \m \X_{[i-1]\setminus V(T_i)} = \x_{[i-1]\setminus V(T_i)} } \\
&= \P{ \X_{S_i} = \om_{S_i} \m 
\X_{[i-1]\setminus V(T_i)} = \x_{[i-1]\setminus V(T_i)}, 
\X_{V(T_i) \setminus \{i\}} = \x_{V(T_i) \setminus \{i\}}, X_i = x_i' }
= \P { \X_{S_i} = \om_{S_i} \m \X_{[i]} = \x^{(i)}_{[i]} }.
\end{align*}
The lemma follows from the combinations of the above.
\end{proof}

Then we introduce a Marton-type coupling \cite{marton2003measure, paulin2015concentration},
more precisely, we construct the joint distribution of random vector $(\Y^{(i)}, \Z^{(i)})$ 
taking values in $\Om \times \Om$, 
with respect to all $i \in [n - 1]$, $\x = (x_1, \ldots, x_n) \in \Om$,
and $\x^{(i)} = (x_1, \ldots, x_{i-1}, x'_i, x_{i+1}, \ldots, x_n)$,
where $x'_i \in \Omega_i$.
Specifically, the distributions of 
$\Y^{(i)} := (Y_1^{(i)}, \ldots, Y_n^{(i)})$ 
and $\Z^{(i)} := (Z_1^{(i)}, \ldots, Z_n^{(i)})$ are set as follows.

\begin{enumerate}[label=(C\arabic*)]
\item $\Y_{[i]}^{(i)} = \x_{[i]}$,
\item For every $ \om_{[i+1,n]} \in \Om_{[i+1,n]}$,
\begin{align}
\P {\Y_{[i+1,n]}^{(i)} = \om_{[i+1,n]} } 
= \P {\X_{[i+1,n]} = \om_{[i+1,n]} \m \X_{[i]} = \x _{[i]}}.
\label{Y}
\end{align}
\item $\Z_{[i]}^{(i)} = \x^{(i)}_{[i]}, \Z_{S_i}^{(i)} = \Y_{S_i}^{(i)}$.
\item For every $\om_{S_i} \in \Om_{S_i}$ and $\ome_{p_i} \in \Omega_{p_i}$,
\begin{align}
\P { Z_{p_i}^{(i)} = \ome_{p_i} \m \Z_{S_i}^{(i)} = \om_{S_i} }
=\P { X_{p_i} = \ome_{p_i} \m \X_{[i]} = \x^{(i)}_{[i]}, \X_{S_i} = \om_{S_i} }.
\label{Z}
\end{align}
\end{enumerate}

The next lemma states that $(\Y^{(i)}, \Z^{(i)})$ has the desired marginal distributions.
\begin{lm}\label{coupling}
For every $i \in [n - 1]$, $\om_{[i+1,n]} \in \Om_{[i+1,n]}$, we have 
\begin{enumerate}[label=(A\arabic*)]
\item $\P {\Y_{[i+1,n]}^{(i)} = \om_{[i+1,n]}}
= \P {\X_{[i+1,n]} = \om_{[i+1,n]} \m \X_{[i]} = \x_{[i]}} $,
\item $\P {\Z_{[i+1,n]}^{(i)} = \om_{[i+1,n]}} 
= \P {\X_{[i+1,n]} = \om_{[i+1,n]} \m \X_{[i]} = \x^{(i)}_{[i]}} $.
\end{enumerate}
\end{lm}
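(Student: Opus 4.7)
My plan is to verify the two marginal identities (A1) and (A2) directly from the coupling construction (C1)--(C4), with Lemma~\ref{lm:indepofxi} supplying the one non-trivial step. Assertion (A1) should be immediate, since it is essentially a rewriting of the defining relation \eqref{Y} in (C2): the distribution of $\Y^{(i)}_{[i+1,n]}$ was \emph{defined} to equal the conditional law of $\X_{[i+1,n]}$ given $\X_{[i]} = \x_{[i]}$.

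For (A2) I would use the fact that $[i+1,n] = S_i \sqcup \{p_i\}$, which is immediate from $S_i := [i+1,n] \setminus \{p_i\}$, and then apply the chain rule
\[
\P{\Z^{(i)}_{[i+1,n]} = \om_{[i+1,n]}}
= \P{Z^{(i)}_{p_i} = \ome_{p_i} \m \Z^{(i)}_{S_i} = \om_{S_i}} \cdot \P{\Z^{(i)}_{S_i} = \om_{S_i}}.
\]
The conditional factor is supplied directly by (C4). For the marginal factor, (C3) says $\Z^{(i)}_{S_i} = \Y^{(i)}_{S_i}$, so I can marginalize (A1) down to the $S_i$-coordinates to obtain
\[
\P{\Z^{(i)}_{S_i} = \om_{S_i}}
= \P{\Y^{(i)}_{S_i} = \om_{S_i}}
= \P{\X_{S_i} = \om_{S_i} \m \X_{[i]} = \x_{[i]}}.
\]
I then invoke Lemma~\ref{lm:indepofxi} to replace $\x_{[i]}$ by $\x^{(i)}_{[i]}$ on the right, and finally apply the chain rule in reverse to collapse the product of the two factors into $\P{\X_{[i+1,n]} = \om_{[i+1,n]} \m \X_{[i]} = \x^{(i)}_{[i]}}$, which is (A2).

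The main obstacle here is conceptual rather than computational. The $\Y$-marginal at the $S_i$-coordinates comes pre-packaged with conditioning on $\x_{[i]}$, whereas the target in (A2) conditions on $\x^{(i)}_{[i]}$, and these two differ precisely in the $i$-th coordinate. Bridging the two is exactly where the tree structure of the dependency graph matters, and it is handled by Lemma~\ref{lm:indepofxi}, whose proof exploits that the fringe subtree $T_i$ is separated from $S_i$ by its sole neighbour $p_i$. Once that bridge is in place, the remaining argument is routine bookkeeping with conditional probabilities.
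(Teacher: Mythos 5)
Your proposal is correct and follows essentially the same route as the paper's own proof: (A1) directly from (C2), and for (A2) the chain-rule split over $S_i$ and $\{p_i\}$, the identification $\Z^{(i)}_{S_i} = \Y^{(i)}_{S_i}$ via (C3), the conditional factor from (C4), the swap of $\x_{[i]}$ for $\x^{(i)}_{[i]}$ via Lemma~\ref{lm:indepofxi}, and the reverse chain rule. No gaps to report.
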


\begin{proof}
(A1) is by the constructions of $\Y^{(i)}$.
For (A2), we arbitrarily choose 
$\om_{[i+1,n]} = (\ome_{i+1}, \ldots, \ome_n) \in \Om_{[i+1,n]}$, then
\begin{align*}
\P {\Z_{[i+1,n]}^{(i)} = \om_{[i+1,n]} }
&=\P {\Z_{S_i}^{(i)} = \om_{S_i} } \P {Z_{p_i}^{(i)} = \ome_{p_i} \m \Z_{S_i}^{(i)} = \om_{S_i}} \\
&=\P {\Y_{S_i}^{(i)} = \om_{S_i} } \P {Z_{p_i}^{(i)} = \ome_{p_i} \m \Z_{S_i}^{(i)} = \om_{S_i}}.
\end{align*}
Combining with \eqref{Y} and \eqref{Z} gives
\begin{align*}
\P {\Z_{[i+1,n]}^{(i)} = \om_{[i+1,n]} }
= \P {\X_{S_i} = \om_{S_i} \m \X_{[i]} = \x_{[i]}}
\P { X_{p_i} = \ome_{p_i} \m \X_{[i]} = \x^{(i)}_{[i]},\X_{S_i} = \om_{S_i}}.
\end{align*}
By Lemma~\ref{lm:indepofxi}, we have
\begin{align*}
\P {\Z_{[i+1,n]}^{(i)} = \om_{[i+1,n]} } 
&= \P {\X_{S_i} = \om_{S_i} \m \X_{[i]} = \x^{(i)}_{[i]}}
\P { X_{p_i} = \ome_{p_i} \m \X_{[i]} = \x^{(i)}_{[i]},\X_{S_i} = \om_{S_i} } \\
&= \P { \X_{[i+1,n]} = \om_{[i+1,n]} \m \X_{[i]} = \x^{(i)}_{[i]} }.
\end{align*}
This completes the proof.
\end{proof}

\begin{lm} 
For every $i \in [n - 1]$, we have
\begin{align*}
\E{ f(\X) \m \X_{[i]} = \x_{[i]} } - \E{ f(\X) \m \X_{[i]} = \x^{(i)}_{[i]} }
\le c_i + c_{p_i}.
\end{align*}
\label{martingale differences lemma}
\end{lm}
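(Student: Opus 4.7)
The plan is to realize the two conditional expectations as ordinary expectations of $f$ evaluated at the coupled vectors $\Y^{(i)}$ and $\Z^{(i)}$, and then exploit the fact that the coupling was engineered so that $\Y^{(i)}$ and $\Z^{(i)}$ can differ in at most two coordinates, namely $i$ and $p_i$.

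First I would apply Lemma \ref{coupling}. Property (A1) combined with construction (C1) (which fixes $\Y^{(i)}_{[i]} = \x_{[i]}$) gives that the full vector $\Y^{(i)}$ has the same distribution as $\X$ conditioned on $\X_{[i]} = \x_{[i]}$, so $\E{ f(\X) \m \X_{[i]} = \x_{[i]} } = \E{ f(\Y^{(i)}) }$. Similarly, (A2) together with (C3) gives $\E{ f(\X) \m \X_{[i]} = \x^{(i)}_{[i]} } = \E{ f(\Z^{(i)}) }$. Thus the quantity to bound becomes $\E{ f(\Y^{(i)}) - f(\Z^{(i)}) }$.

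Next I would compare $\Y^{(i)}$ and $\Z^{(i)}$ coordinate by coordinate. On $[i-1]$ both vectors equal $\x_{[i-1]}$ by (C1) and (C3). At coordinate $i$ they equal $x_i$ and $x'_i$ respectively. On $S_i = [i+1,n] \setminus \{p_i\}$ they agree by the explicit equality $\Z_{S_i}^{(i)} = \Y_{S_i}^{(i)}$ in (C3). The only coordinate where they can differ apart from $i$ is $p_i$. Applying the $\c$-Lipschitz property of $f$ pointwise gives
\begin{align*}
f(\Y^{(i)}) - f(\Z^{(i)}) \le c_i \I{Y^{(i)}_i \ne Z^{(i)}_i} + c_{p_i} \I{Y^{(i)}_{p_i} \ne Z^{(i)}_{p_i}} \le c_i + c_{p_i}.
\end{align*}
Taking expectations yields the claimed bound.

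There is no real obstacle here: all the work was done in Lemma \ref{lm:indepofxi} and Lemma \ref{coupling}, which guarantee that the coupling simultaneously has the correct marginals and concentrates the disagreement onto the two coordinates $\{i, p_i\}$. The present lemma is just the Lipschitz comparison on top of that coupling, so the proof should amount to a short three- or four-line calculation.
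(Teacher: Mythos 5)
Your proposal is correct and follows essentially the same route as the paper: it invokes the coupling of Lemma \ref{coupling} to rewrite the two conditional expectations as $\E{f(\Y^{(i)})}$ and $\E{f(\Z^{(i)})}$, and then applies the $\c$-Lipschitz property together with the fact that the coupled vectors agree outside the coordinates $i$ and $p_i$. Your coordinate-by-coordinate verification of where $\Y^{(i)}$ and $\Z^{(i)}$ can differ is exactly the observation the paper uses in its final inequality.
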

\begin{proof}
By the construction of random vectors $\Y^{(i)}, \Z^{(i)}$ and Lemma \ref{coupling}, we have
\begin{align*}
\E{ f(\X) \m \X_{[i]} = \x_{[i]}} - \E{ f(\X) \m \X_{[i]} = \x^{(i)}_{[i]} }
&= \E{ f(\Y^{(i)})} - \E{ f(\Z^{(i)}) }.
\end{align*}
By the linearity of expectation and the Lipschitz assumption (\ref{lip}), we get
\begin{align*}
\E{ f(\Y^{(i)})} - \E{ f(\Z^{(i)}) }
= \E{ f(\Y^{(i)}) - f(\Z^{(i)}) }
\le \E{ \sum^n_{j=1}c_j \I{Y_j \ne Z_j} }
\le c_i+c_{p_i},
\end{align*}
where the last inequality is because the only different variables of 
$\Y^{(i)}, \Z^{(i)}$ are the $i$-th and $p_i$-th ones due to the coupling construction.
\end{proof}

We are now ready to prove Theorem~\ref{tree} and Theorem~\ref{forest}.

\begin{proof}[Proof of Theorem~\ref{tree}]
Combining Lemma~\ref{McDiarmid's Lemma} and Lemma~\ref{martingale differences lemma}, we have
\begin{align*}
\P{ f(\X) - \E { f( \X ) } \ge t }
&\le \Exp{ -\dfrac{2t^2}{ c_n^2 + \sum_{i \in V(G) \setminus \{n\} }( c_i + c_{p_i} )^2 } } 
= \Exp{ -\dfrac{2t^2}
{ c_{\min}^2 + \sum_{ \{ i, j \} \in E(G)} ( c_i + c_j )^2 } },
\end{align*}
where the equality is due to the \textsf{Rooted} and \textsf{Ordered} assumptions.
\end{proof}

\begin{proof}[Proof of Theorem~\ref{forest}]
The proof is similar to that of Theorem~\ref{tree}. 
Without loss of generality, we assume that each tree $T_i$ of the forest $G$ 
is \textsf{Rooted} and \textsf{Ordered}. 
Then the proofs of Lemmas \ref{lm:indepofxi} - \ref{martingale differences lemma} remain valid, 
since variables in different connected components are independent. 
Then, the theorem follows from Lemma \ref{McDiarmid's Lemma}.
\end{proof}

\begin{proof}[Proof of Theorem~\ref{decom}]
Let $\{ ( F_{k}, w_{k} ) \}_{k \in [K]}$ be a fractional forest cover of $G$.
Since function $f$ is forest-decomposable $\c$-Lipschitz with respect to $G$,
then for $s > 0$, we have
$
\E{ \Exp{ s( f(\X) - \E { f( \X ) } )} }
= \E{ \Exp{ \sum_{ k \in [K] } s w_k f( \X_{F_k} ) } }.
$
Let $z_1, \ldots, z_K$ be any set of $K$ positive reals that sum to $1$.
By the Jensen's inequality,
\begin{align*}
\E{ \Exp{ s( f(\X) - \E { f( \X ) } )} }
&\le \E{ \sum_{ k \in [K] } z_k \Exp{ \frac{ s w_k }{ z_k } 
( f( \X_{F_k} ) - \E{ f( \X_{F_k} ) } ) } } \\
&= \sum_{ k \in [K] } z_k \E{ \Exp{ \frac{ s w_k }{ z_k } 
(f( \X_{F_k} ) - \E{f( \X_{F_k} )}) } },
\end{align*}
where the equality is due to the linearity of expectation.
Then Lemma \ref{martingale differences lemma} and Lemma \ref{McDiarmid's Lemma} give
\begin{align}
\E{ \Exp{ s( f(\X) - \E { f( \X ) } )} } 
\le \sum_{ k \in [K] } z_k  \Exp{ \frac{ s^2 w_k^2 }{ 8 z_k^2 } 
\left( \sum_{\{i, j\} \in E(G[F_k])} ( c_i + c_j )^2 
+ \sum_{T \in \T(F_k)} c_{\min,k,T}^2 \right) }.
\end{align}
Next, for all $k \in [K]$, we choose 
\begin{align*}
z_k = \frac{w_k}{Z} \sqrt{ 
\sum_{\{i, j\} \in E(G[F_k])} ( c_i + c_j )^2 
+ \sum_{T \in \T(F_k)} c_{\min,k,T}^2 },
\end{align*}
with
\begin{align*}
Z = \sum_{k \in [K]} w_k \sqrt{
\sum_{\{i, j\} \in E(G[F_k])} ( c_i + c_j )^2 
+ \sum_{T \in \T(F_k)} c_{\min,k,T}^2 }.
\end{align*}
Hence we have
$
\E{ \Exp{ s( f(\X) - \E { f( \X ) } )} }
\le \Exp{ s^2 Z^2 / 8 }.
$
Combining with Lemma~\ref{McDiarmid's Lemma} completes the proof.
\end{proof}

\section{Discussions}
\label{discuss}

We establish bounded difference inequalities for forest-dependent random variables;
it is unclear whether the proof based on coupling technique
can be adapted to the case of general graph-dependent case
without imposing decomposability constraints to the functions.
Some heuristic ideas of transforming the general graph to a forest
are given in \cite{zhang2019mcdiarmid}, 
however, various ad hoc constructions are needed for different graphs.
For the summation, which satisfies the decomposability constraint,
we obtain a better bound under graph-dependence than Janson's bound.
The direct application of Theorem \ref{forest} under forest-dependence
for summation may also give better results than Janson's, see Remark \ref{re}.

The (fractional) forest vertex covering used in Subsection \ref{frac} 
closely relates to (fractional) vertex arboricity.
Given a graph $G$,
the vertex arboricity $a(G)$ is the minimum
number of subsets into which the vertex set $V (G)$ can be partitioned
so that each subset induces an acyclic subgraph,
and the fractional vertex arboricity
$a_f(G)$ is the minimum of $\sum_{k} w_{k}$
over $ \textsf{FFC}(G) $.
Other upper bounds on $D(G, \c)$ in $(\ref{T})$ 
can also be obtained via fractional vertex arboricity
following Janson's approach using the Cauchy-Schwarz inequality,
see Remark \ref{jan}.

Other dependence characterizations for concentration
widely used in random fields and statistical physics 
are various dependency matrices, 
which quantify the strength of dependence among variables.
These matrices include Dobrushin interdependence matrix \cite{dobruschin1968description, chatterjee2005concentration}
and other mixing-based dependency matrices \cite{samson2000concentration,
kontorovich2008concentration, paulin2015concentration}.
To employ their results, suitable estimates of the mixing coefficients
are required for specific applications,
which might not be handy in the combinatorial applications.
On the other hand, the coupling construction in this note 
can be used for the estimation of mixing coefficients, 
see \cite{kontorovich2017concentration}.

\section*{Acknowledgements}

The author would like to thank his supervisor Nick Wormald for valuable comments and discussions.
The author also thanks David Wood for a helpful discussion on graph coverings.
The author is grateful to an anonymous referee 
for comments, which lead to improvements of Subsection \ref{sec-m}.
Part of this work was done when the author was 
at the Institute of Computing Technology, Chinese Academy of Sciences
under the supervision of Xingwu Liu,
to whom we are grateful for discussions.
Some results appeared in \cite{zhang2019mcdiarmid} without proofs included.
This research did not receive any specific grant
from funding agencies in the public, commercial, or not-for-profit sectors.

\bibliographystyle{plain}
\bibliography{ref}

\begin{thebibliography}{10}

\bibitem{baldi1989normal}
Pierre Baldi and Yosef Rinott.
\newblock On normal approximations of distributions in terms of dependency
  graphs.
\newblock {\em The Annals of Probability}, 17(4):1646--1650, 1989.

\bibitem{boucheron2013concentration}
St{\'e}phane Boucheron, G{\'a}bor Lugosi, and Pascal Massart.
\newblock {\em Concentration inequalities: A nonasymptotic theory of
  independence}.
\newblock Oxford university press, 2013.

\bibitem{burton19931}
Robert~M Burton, Marc Goulet, and Ronald Meester.
\newblock On 1-dependent processes and $ k $-block factors.
\newblock {\em The Annals of Probability}, 21(4):2157--2168, 1993.

\bibitem{chatterjee2005concentration}
Sourav Chatterjee.
\newblock Concentration inequalities with exchangeable pairs ({P}h. {D}.
  thesis).
\newblock {\em arXiv preprint math/0507526}, 2005.

\bibitem{chen1975poisson}
Louis~HY Chen.
\newblock Poisson approximation for dependent trials.
\newblock {\em The Annals of Probability}, pages 534--545, 1975.

\bibitem{chen2004normal}
Louis~HY Chen and Qi-Man Shao.
\newblock Normal approximation under local dependence.
\newblock {\em The Annals of Probability}, 32(3):1985--2028, 2004.

\bibitem{dobruschin1968description}
PL~Dobruschin.
\newblock The description of a random field by means of conditional
  probabilities and conditions of its regularity.
\newblock {\em Theory of Probability \& Its Applications}, 13(2):197--224,
  1968.

\bibitem{erdos1975problems}
Paul Erd\H{o}s and L{\'a}szl{\'o} Lov{\'a}sz.
\newblock Problems and results on 3-chromatic hypergraphs and some related
  questions.
\newblock {\em Infinite and finite sets}, 10(2):609--627, 1975.

\bibitem{hoeffding1948central}
Wassily Hoeffding and Herbert Robbins.
\newblock The central limit theorem for dependent random variables.
\newblock {\em Duke Mathematical Journal}, 15(3):773--780, 1948.

\bibitem{isaev2020extreme}
M~Isaev, Igor~Vladimirovich Rodionov, R-R Zhang, and Maksim~Evgen'evich
  Zhukovskii.
\newblock Extreme value theory for triangular arrays of dependent random
  variables.
\newblock {\em Russian Mathematical Surveys}, 75(5):968, 2020.

\bibitem{janson1988normal}
Svante Janson.
\newblock Normal convergence by higher semiinvariants with applications to sums
  of dependent random variables and random graphs.
\newblock {\em The Annals of Probability}, 16(1):305--312, 1988.

\bibitem{janson1990poisson}
Svante Janson.
\newblock Poisson approximation for large deviations.
\newblock {\em Random Structures \& Algorithms}, 1(2):221--229, 1990.

\bibitem{janson2004large}
Svante Janson.
\newblock Large deviations for sums of partly dependent random variables.
\newblock {\em Random Structures \& Algorithms}, 24(3):234--248, 2004.

\bibitem{janson1988exponential}
Svante Janson, Tomasz Luczak, and Andrzej Rucinski.
\newblock {\em An exponential bound for the probability of nonexistence of a
  specified subgraph in a random graph}.
\newblock Institute for Mathematics and its Applications (USA), 1988.

\bibitem{kontorovich2017concentration}
Aryeh Kontorovich and Maxim Raginsky.
\newblock Concentration of measure without independence: a unified approach via
  the martingale method.
\newblock In {\em Convexity and Concentration}, pages 183--210. Springer, 2017.

\bibitem{kontorovich2008concentration}
Leonid~Aryeh Kontorovich and Kavita Ramanan.
\newblock Concentration inequalities for dependent random variables via the
  martingale method.
\newblock {\em The Annals of Probability}, 36(6):2126--2158, 2008.

\bibitem{marton2003measure}
Katalin Marton.
\newblock Measure concentration and strong mixing.
\newblock {\em Studia Scientiarum Mathematicarum Hungarica}, 40(1-2):95--113,
  2003.

\bibitem{mcdiarmid1989method}
Colin McDiarmid.
\newblock On the method of bounded differences.
\newblock {\em Surveys in combinatorics}, 141(1):148--188, 1989.

\bibitem{paulin2015concentration}
Daniel Paulin.
\newblock Concentration inequalities for markov chains by marton couplings and
  spectral methods.
\newblock {\em Electronic Journal of Probability}, 20, 2015.

\bibitem{petrovskaya1983central}
MB~Petrovskaya and Aleksandr~Mikhailovich Leontovich.
\newblock The central limit theorem for a sequence of random variables with a
  slowly growing number of dependences.
\newblock {\em Theory of Probability \& Its Applications}, 27(4):815--825,
  1983.

\bibitem{samson2000concentration}
Paul-Marie Samson.
\newblock Concentration of measure inequalities for markov chains and
  $\phi$-mixing processes.
\newblock {\em Annals of Probability}, pages 416--461, 2000.

\bibitem{stein1972bound}
Charles Stein.
\newblock A bound for the error in the normal approximation to the distribution
  of a sum of dependent random variables.
\newblock In {\em Proceedings of the Sixth Berkeley Symposium on Mathematical
  Statistics and Probability, Volume 2: Probability Theory}. The Regents of the
  University of California, 1972.

\bibitem{zhang2019mcdiarmid}
Rui-Ray Zhang, Xingwu Liu, Yuyi Wang, and Liwei Wang.
\newblock Mc{D}iarmid-type inequalities for graph-dependent variables and
  stability bounds.
\newblock {\em Advances in Neural Information Processing Systems},
  32:10890--10901, 2019.

\end{thebibliography}



\end{document}